\documentclass[11pt]{article}

\usepackage[T1]{fontenc}
\usepackage[utf8]{inputenc}
\usepackage{amsmath,amssymb,amsthm}
\usepackage{geometry}
\usepackage{microtype}
\usepackage{xcolor}
\usepackage{hyperref}

\hypersetup{
  colorlinks=true,
  linkcolor=blue!45!black,
  citecolor=blue!45!black,
  urlcolor=blue!45!black,
  pdftitle={Optimal Finite Interval Discrepancy via Binary Refinement},
  pdfauthor={Arthur F. Ramos, David B. Hulak, Ruy J. G. B. de Queiroz}
}

\newtheorem{theorem}{Theorem}[section]
\newtheorem{lemma}[theorem]{Lemma}
\newtheorem{corollary}[theorem]{Corollary}
\newtheorem{proposition}[theorem]{Proposition}
\theoremstyle{remark}
\newtheorem{remark}[theorem]{Remark}

\DeclareMathOperator{\disc}{disc}
\newcommand{\ceil}[1]{\left\lceil #1 \right\rceil}

\title{Optimal Finite Interval Discrepancy via Binary Refinement}
\author{%
Arthur F. Ramos\thanks{\raggedright Microsoft; \texttt{arfreita@microsoft.com}}
\and
David B. Hulak\thanks{\raggedright Independent Researcher; \texttt{dbhulak@gmail.com}}
\and
Ruy J. G. B. de Queiroz\thanks{\raggedright Centro de Inform\'atica, Universidade Federal de Pernambuco; \texttt{ruy@cin.ufpe.br}.}}
\date{}

\begin{document}

\maketitle

\begin{abstract}
DeLeo, Henderschedt, and Wells introduced a finite-horizon version of the classical de Bruijn--Erdos interval discrepancy problem. Starting from the unit interval, one repeatedly splits an existing interval into two until $n$ intervals are present, and one minimizes the largest ratio between the longest and shortest intervals over all intermediate partitions. They constructed the lex-merge strategy, whose discrepancy is $2^{1-1/\ceil{n/2}}$, and conjectured that this value is optimal for every $n$. We prove the conjecture. More generally, we establish a sharp lower bound for arbitrary binary refinement processes of positive masses: any process that starts with one positive mass, repeatedly replaces one mass by two positive masses with the same total, and terminates with $n$ masses must at some stage have largest-to-smallest ratio at least $2^{1-1/\ceil{n/2}}$. The proof tracks the minimum mass under refinement and uses the forced survival of a piece near the midpoint of the process. We also record the corresponding universal lower bound for $r$-ary refinements.
\end{abstract}

\noindent\textbf{Keywords:} interval discrepancy; binary refinement; balanced stick breaking; de Bruijn--Erdos; extremal combinatorics.

\section{Introduction}

A classical problem of de Bruijn and Erd\H{o}s asks how uniformly a sequence of points can successively divide a circle. In the equivalent interval formulation, one begins with a single interval and repeatedly divides one existing interval into two. The quality of a partition is measured by the ratio between its longest and shortest constituent intervals. De Bruijn and Erd\H{o}s proved that, for an infinite splitting process, discrepancy $2$ is asymptotically unavoidable and achievable; see \cite{deBruijnErdos}.

DeLeo, Henderschedt, and Wells \cite{DHW} studied the corresponding finite-horizon optimization problem. If $\mathcal I$ is a partition into positive-length intervals, write
\[
  \disc(\mathcal I)
  = \frac{\max\{|I|: I\in\mathcal I\}}
         {\min\{|I|: I\in\mathcal I\}}.
\]
A strategy of length $n$ is a sequence
\[
  \mathcal S=(\mathcal I_1,\ldots,\mathcal I_n),
\]
where $\mathcal I_i$ consists of $i$ intervals and $\mathcal I_{i+1}$ is obtained from $\mathcal I_i$ by splitting one interval into two. Its discrepancy is
\[
  \disc(\mathcal S)=\max_{1\leq i\leq n}\disc(\mathcal I_i),
  \qquad
  \disc(n)=\min_{\mathcal S}\disc(\mathcal S).
\]

The preprint \cite{DHW} proves the bounds
\[
  2^{1-1/\ceil{n/3}}
  \leq \disc(n)
  \leq 2^{1-1/\ceil{n/2}},
  \tag{1}
\]
where the upper bound is realized by an explicit construction called lex-merge. Its Theorem 3.5 computes the discrepancy of that construction exactly, and its Conjecture 1.4 asserts that the upper bound is optimal for every positive integer $n$.

The purpose of this note is to prove that conjecture. The conceptual core is a
geometry-free lower bound for binary refinements; the exact interval result then
follows by combining it with the lex-merge upper bound. Suppose that a positive
mass is repeatedly replaced by two positive masses whose sum equals the parent
mass. If the spread between the largest and smallest masses is bounded by $D$,
then after each refinement the new minimum is at most $(D/2)$ times the
previous minimum. On the other hand, once the process contains more pieces
than there are refinements remaining, at least one present piece must survive
unchanged to the terminal state. Comparing these two facts gives the optimal
lower bound.

\section{The universal binary-refinement lower bound}
\label{sec:binary}

We formulate the argument independently of intervals. A binary refinement process of length $n$ consists of states
\[
  P_1,P_2,\ldots,P_n,
\]
where $P_i$ contains $i$ distinct pieces, together with positive weights
\[
  w_i(x)>0 \qquad (x\in P_i).
\]
To pass from $P_i$ to $P_{i+1}$, one piece $x\in P_i$ is removed and replaced by two new pieces $y,z$, with
\[
  w_{i+1}(y)+w_{i+1}(z)=w_i(x),
  \tag{2}
\]
while every other piece persists with its weight unchanged. Pieces are regarded as distinct even when they have equal weights, so their identities can be followed through the process. Equivalently, one may assign each piece a finite binary address, appending $0$ and $1$ to the address of a piece when it is split. This provides persistent identities without using interval geometry.

For each state define
\[
  a_i=\min_{x\in P_i}w_i(x),
  \qquad
  b_i=\max_{x\in P_i}w_i(x),
\]
and define the discrepancy of the process by
\[
  D=\max_{1\leq i\leq n}\frac{b_i}{a_i}.
  \tag{3}
\]

The first observation records the effect of one refinement on the minimum weight.

\begin{lemma}
\label{lem:min-decay}
For every $1\leq i<n$,
\[
  a_{i+1}\leq \frac{D}{2}a_i.
  \tag{4}
\]
\end{lemma}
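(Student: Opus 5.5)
The plan is to look only at the step from $P_i$ to $P_{i+1}$ and at the one piece that changes. Let $x\in P_i$ be the piece that is split, and let $y,z\in P_{i+1}$ be its two children. By (2), $w_{i+1}(y)+w_{i+1}(z)=w_i(x)$. Hence the smaller child has weight at most $w_i(x)/2$. Since $a_{i+1}$ is the minimum over all of $P_{i+1}$, this gives
\[
  a_{i+1}\leq \min\{w_{i+1}(y),w_{i+1}(z)\}\leq \frac{w_i(x)}{2}.
\]

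Next I bound $w_i(x)$ using the discrepancy of the state $P_i$. We have $w_i(x)\leq b_i$, and by the definition (3) of $D$ we have $b_i\leq D\,a_i$. Chaining these gives
\[
  a_{i+1}\leq \frac{b_i}{2}\leq \frac{D}{2}\,a_i,
\]
which is (4).

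There is no real obstacle here. The one point to check is that the ratio bound is applied at index $i$, which is allowed because $1\leq i<n$ lies in the range of the maximum in (3). Note that the argument does not need the fact that the other pieces keep their weights: the new minimum is already controlled by the smaller child alone.
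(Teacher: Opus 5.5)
Your proposal is correct and is essentially the paper's proof: bound $a_{i+1}$ by the smaller child of the split piece $x$, which is at most $w_i(x)/2$, and then use $w_i(x)\leq b_i\leq D\,a_i$ from the definition (3) at index $i$. Your remark that the persistence of the other pieces is not needed also holds for the paper's argument.
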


\begin{proof}
Suppose that the piece $x\in P_i$ is replaced by $y,z$. By (3),
\[
  w_i(x)\leq b_i\leq D a_i.
\]
Since the two new weights sum to $w_i(x)$, at least one of them is no larger than half of $w_i(x)$. Therefore
\[
  a_{i+1}
  \leq \min\{w_{i+1}(y),w_{i+1}(z)\}
  \leq \frac{w_i(x)}2
  \leq \frac{D}{2}a_i.
\]
\end{proof}

The second ingredient is purely combinatorial.

\begin{lemma}
\label{lem:survivor}
Let
\[
  m=\ceil{n/2},
  \qquad
  s=n-m+1.
  \tag{5}
\]
At least one piece present in $P_s$ survives unchanged through $P_n$.
\end{lemma}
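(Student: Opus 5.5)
The plan is a pigeonhole comparison between the number of pieces present in $P_s$ and the number of refinement steps that remain. Between $P_s$ and $P_n$ there are exactly $n-s = m-1$ refinements, and each refinement removes exactly one piece of the current state. I will argue that at most $m-1$ of the pieces of $P_s$ can ever be removed. Since $P_s$ has $s = n-m+1$ pieces, it then suffices to check that $s > m-1$, that is, $n-m+1 \geq m$, or $n \geq 2m-1$.

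The inequality is immediate from $m = \ceil{n/2} \leq (n+1)/2$, which gives $2m-1 \leq n$. Hence $s \geq m > m-1$.

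The substantive step is to make precise that each refinement step can destroy at most one piece of $P_s$. Following the binary-address bookkeeping introduced before Lemma~\ref{lem:min-decay}, each step $P_i \to P_{i+1}$ with $i \geq s$ removes a single piece $x \in P_i$. That piece is either an original member of $P_s$ that has persisted with unchanged weight up to time $i$, or a descendant created at some later step. Any original piece that is not removed at any step $i$ with $s \leq i < n$ persists in every state, with its weight unchanged by the rule following (2). So the set of original pieces of $P_s$ that fail to survive to $P_n$ injects into the set of steps $\{s,\ldots,n-1\}$, which has size $m-1$. Consequently at least $s-(m-1) \geq 1$ pieces of $P_s$ survive unchanged through $P_n$.

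I expect no real obstacle here. The only care needed is in the identity convention: a removed piece never reappears, because newly created pieces are fresh, so the counting of destroyed originals cannot double count. The argument in fact yields the slightly stronger statement that at least $s-m+1 = n-2m+2$ pieces survive, which is $1$ or $2$ according as $n$ is odd or even.
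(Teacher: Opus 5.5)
Your proposal is correct and follows essentially the same route as the paper: the same pigeonhole argument comparing the $s$ pieces of $P_s$ with the $m-1$ remaining refinements, each of which removes at most one original piece. The only difference is cosmetic: you get $s>m-1$ from $\ceil{n/2}\leq (n+1)/2$ instead of the paper's parity split, and you note the slightly sharper survivor count $n-2m+2$.
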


\begin{proof}
There are $n-s=m-1$ refinements remaining after state $P_s$. On the other hand, $|P_s|=s$. If $n=2q$, then $m=q$ and $s=q+1$; if $n=2q-1$, then $m=q$ and $s=q$. In either case, $s>m-1$.

For every piece of $P_s$ to disappear before the terminal state, each such piece would have to be refined at least once. The first refinement of distinct pieces of $P_s$ must occur at distinct refinement steps, so eliminating all $s$ pieces requires at least $s$ subsequent refinements. Only $m-1<s$ refinements remain. Hence at least one piece of $P_s$ is never refined and persists unchanged through $P_n$.
\end{proof}

The two observations give the sharp obstruction. This geometry-free theorem is
the conceptual core of the note; the interval result will follow by combining
it with the lex-merge construction.

\begin{theorem}[Universal binary-refinement lower bound]
\label{thm:binary}
Every binary refinement process of length $n$ satisfies
\[
  D\geq 2^{1-1/\ceil{n/2}}.
  \tag{6}
\]
\end{theorem}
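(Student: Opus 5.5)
We combine Lemma~\ref{lem:min-decay} and Lemma~\ref{lem:survivor} by comparing the minimum weight at the terminal state with the weight of a surviving piece. Put $m=\ceil{n/2}$ and $s=n-m+1$ as in (5). If $m=1$ (that is, $n\le 2$), the claimed bound is $2^0=1$, and $D\ge 1$ holds trivially because $b_i\ge a_i$. So assume $m\ge 2$. If $D\ge 2$ we are done as well, since $2^{1-1/m}<2$. Hence we may assume $D<2$, in which case $D/2<1$ and the multiplicative decay in (4) is a genuine contraction.

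First, we iterate Lemma~\ref{lem:min-decay} from state $s$ to state $n$. This uses $n-s=m-1$ steps and gives
\[
  a_n\le \Bigl(\frac D2\Bigr)^{m-1}a_s .
\]
Next, Lemma~\ref{lem:survivor} supplies a piece $x\in P_s$ that survives unchanged to $P_n$, so $w_n(x)=w_s(x)\ge a_s$. Applying (3) at the terminal state then yields
\[
  D\ge \frac{b_n}{a_n}\ge \frac{w_n(x)}{a_n}\ge \frac{a_s}{(D/2)^{m-1}a_s}=\Bigl(\frac 2D\Bigr)^{m-1}.
\]
Rearranging gives $D^{m}\ge 2^{m-1}$, that is, $D\ge 2^{1-1/m}$, which is (6).

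The main obstacle is bookkeeping rather than any new idea. We need to check that the survivor's weight is compared against $a_s$ at exactly the index $s$ where the decay chain starts. We also need to check that the number of decay steps, $m-1$, matches the exponent that produces $2^{1-1/m}$.

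The case split on $D<2$ is not strictly needed, since the rearrangement is valid for any $D>0$. We keep it only for clarity. The small case $n\le 2$ is covered either way, because then $m-1=0$ and the inequality reads $D\ge 1$.
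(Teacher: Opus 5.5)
Your proof is correct and matches the paper's argument: you iterate Lemma~\ref{lem:min-decay} over the $m-1$ steps from $P_s$ to $P_n$, and use the survivor from Lemma~\ref{lem:survivor} to get $a_s\le w_n(x)\le b_n\le D\,a_n$, which yields $D^m\ge 2^{m-1}$. The preliminary case splits on $m=1$ and $D<2$ are harmless but, as you yourself note, unnecessary.
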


\begin{proof}
Set $m=\ceil{n/2}$ and $s=n-m+1$. By Lemma~\ref{lem:survivor}, choose a piece $x\in P_s$ that survives unchanged through $P_n$. Since $x\in P_s$ and its weight is unchanged,
\[
  a_s\leq w_s(x)=w_n(x)\leq b_n\leq D a_n.
  \tag{7}
\]
There are exactly $n-s=m-1$ refinements between $P_s$ and $P_n$. Iterating Lemma~\ref{lem:min-decay} yields
\[
  a_n\leq \left(\frac{D}{2}\right)^{m-1}a_s.
  \tag{8}
\]
Combining (7) and (8), and using $a_s>0$, gives
\[
  1\leq D\left(\frac{D}{2}\right)^{m-1}
   =\frac{D^m}{2^{m-1}}.
\]
Thus $D^m\geq 2^{m-1}$, and taking the positive $m$-th root proves (6).
\end{proof}

\begin{remark}
\label{rem:universality}
The proof uses neither the total mass nor any geometric structure. Only positivity, conservation under a binary split, and the number of refinement steps enter the argument. In particular, normalizing the initial mass to $1$ has no effect on the result.
\end{remark}

\section{Exact finite interval discrepancy}
\label{sec:interval}

Every interval strategy induces a binary refinement process by taking the pieces to be the intervals and their weights to be their lengths. Splitting an interval of length $x$ into intervals of lengths $y$ and $z$ satisfies $y+z=x$, so Theorem~\ref{thm:binary} applies immediately.

\begin{corollary}
\label{cor:interval-lower}
For every positive integer $n$,
\[
  \disc(n)\geq 2^{1-1/\ceil{n/2}}.
  \tag{9}
\]
\end{corollary}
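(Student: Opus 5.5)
The plan is to reduce Corollary~\ref{cor:interval-lower} directly to Theorem~\ref{thm:binary}, applying the theorem to every interval strategy and then minimizing. First, fix a positive integer $n$ and an arbitrary interval strategy $\mathcal S=(\mathcal I_1,\ldots,\mathcal I_n)$ of length $n$.

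Next, build a binary refinement process from $\mathcal S$. Take $P_i=\mathcal I_i$, regarding each interval as a piece with its own identity. Put $w_i(I)=|I|$; this weight is positive because all intervals in a strategy have positive length.

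Then check the axioms of a binary refinement process. The state $P_i$ contains exactly $i$ pieces. Passing from $\mathcal I_i$ to $\mathcal I_{i+1}$ splits one interval $I$ into two subintervals $J,K$ with $|J|+|K|=|I|$, which is exactly condition (2). All other intervals of $\mathcal I_i$ appear unchanged in $\mathcal I_{i+1}$, so their weights persist. The one point to state carefully is that the identities of pieces follow the intervals themselves. The paper's binary-address device supplies these identities, so the survival argument of Lemma~\ref{lem:survivor} applies without change.

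Under this correspondence $b_i/a_i=\disc(\mathcal I_i)$, so the quantity $D$ in (3) equals $\disc(\mathcal S)$. Theorem~\ref{thm:binary} then gives $\disc(\mathcal S)\ge 2^{1-1/\ceil{n/2}}$. Since $\mathcal S$ was arbitrary, the bound holds for every strategy. Taking the minimum over strategies, which is attained or at least bounded below by the same constant, yields (9).

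There is no real obstacle here. The only step needing care is confirming that the definitions match: the lengths are positive, each split conserves length, and the untouched intervals keep their identities. Once these are checked, the corollary is immediate from the theorem.
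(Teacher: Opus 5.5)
Your proposal is correct and is essentially the paper's proof: the paper also views an arbitrary interval strategy as a binary refinement process with lengths as weights, applies Theorem~\ref{thm:binary}, and then takes the minimum over strategies. Your extra checks (positivity of lengths, conservation under a split, and persistence of the identities of untouched intervals) are exactly the details the paper leaves implicit.
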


\begin{proof}
Apply Theorem~\ref{thm:binary} to the sequence of interval lengths of an arbitrary strategy of length $n$, and then take the minimum over all strategies.
\end{proof}

The abstract mass model has a converse geometric realization.

\begin{lemma}
\label{lem:interval-realization}
After normalizing the initial mass to $1$, every binary refinement process of
length $n$ has a realization by intervals in $[0,1]$ whose lengths agree with
the corresponding masses at every stage. In particular, the discrepancies of
the two processes agree at every stage.
\end{lemma}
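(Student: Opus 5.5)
We construct the realization by induction on the stage index $i$, maintaining a bijection $\phi_i$ from the pieces of $P_i$ to a partition $\mathcal I_i$ of $[0,1]$ into $i$ nonempty intervals. The invariant is that $|\phi_i(x)| = w_i(x)$ for every $x\in P_i$, where masses have been normalized so that $w_1$ of the unique initial piece equals $1$. Since scaling all weights by the same positive constant does not change any ratio $b_i/a_i$, this normalization is harmless, in line with Remark~\ref{rem:universality}.

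For the base case, set $\phi_1$ to send the single piece of $P_1$ to $[0,1]$. Its length is $1=w_1$.

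For the inductive step, suppose $\phi_i$ has been constructed, and that $P_{i+1}$ arises from $P_i$ by replacing $x$ with $y,z$. Write $\phi_i(x)=[c,d]$ with $d-c=w_i(x)$. Define $\phi_{i+1}(y)=[c,\,c+w_{i+1}(y)]$ and $\phi_{i+1}(z)=[c+w_{i+1}(y),\,d]$. By (2), the right endpoint of the second interval has the correct length $w_{i+1}(z)$. Both intervals are nondegenerate because the new weights are positive, and together they tile $[c,d]$. Every other piece keeps its interval, that is, $\phi_{i+1}=\phi_i$ off $\{x\}$. Hence $\mathcal I_{i+1}$ is again a partition of $[0,1]$ into $i+1$ intervals, obtained from $\mathcal I_i$ by splitting exactly one interval, and the length invariant holds. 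The resulting sequence $(\mathcal I_1,\dots,\mathcal I_n)$ is therefore a strategy in the sense of the introduction.

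Finally, since $\phi_i$ is a length-preserving bijection, the multisets of interval lengths in $\mathcal I_i$ and of weights in $P_i$ coincide at every stage. So $\disc(\mathcal I_i)=b_i/a_i$ for each $i$, and taking maxima shows the two discrepancies agree.

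There is no real obstacle here. The only point needing care is the bookkeeping that intervals remain distinct and nondegenerate, which is exactly where positivity of the weights is used. The authors may also intend to emphasize that the choice of placing $y$ to the left of $z$ is arbitrary.
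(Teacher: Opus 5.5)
Your proof is correct and follows the paper's own argument: induction starting from $[0,1]$, with the parent's interval $[c,d]$ split at $c+w_{i+1}(y)$, and positivity of the weights ensuring that both new intervals are nondegenerate. Your observation that the length-preserving bijection makes the multisets of lengths and weights coincide at each stage is what the paper also uses to conclude that the discrepancies agree stage by stage.
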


\begin{proof}
Assign the initial piece the interval $[0,1]$. Suppose that, at some stage,
the parent piece $x$ has been assigned $[a,b]$ and has mass
$w_i(x)=b-a$. If its two children have positive masses $u$ and $v$ with
$u+v=w_i(x)$, replace $[a,b]$ by the adjacent intervals
$[a,a+u]$ and $[a+u,b]$. All other intervals remain unchanged. The new
intervals are nonempty and have lengths $u$ and $v$, so induction preserves
the partition and the claimed correspondence of lengths and masses.
\end{proof}

Thus the binary-refinement model is exact for the finite interval problem,
rather than merely a relaxation of it.

DeLeo, Henderschedt, and Wells describe lex-merge as a merging process.
Reversing that sequence gives a binary refinement of the basket weights, and
normalizing all basket lengths by their common total does not change any
discrepancy. By Lemma~\ref{lem:interval-realization}, this normalized mass
process has an interval realization with the same discrepancy. The resulting
lex-merge construction has
\[
  \disc(\mathrm{LM}_n)=2^{1-1/\ceil{n/2}}
  \tag{10}
\]
by Theorem 3.5 of \cite{DHW}. Combining (9) and (10) proves the main result.

\begin{theorem}
\label{thm:main}
For every positive integer $n$,
\[
  \boxed{\displaystyle \disc(n)=2^{1-1/\ceil{n/2}}.}
  \tag{11}
\]
In particular, lex-merge is optimal for every finite horizon.
\end{theorem}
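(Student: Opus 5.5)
The plan is to prove the equality by establishing the two inequalities separately. For the lower bound I will invoke Corollary~\ref{cor:interval-lower}. For the upper bound I will exhibit the lex-merge strategy $\mathrm{LM}_n$ and use (10).

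For the lower bound, let $\mathcal S=(\mathcal I_1,\ldots,\mathcal I_n)$ be an arbitrary strategy. Taking pieces to be intervals and weights to be lengths gives a binary refinement process of length $n$, since each split satisfies the conservation law (2). Moreover, $\disc(\mathcal S)$ coincides with the quantity $D$ of (3). Theorem~\ref{thm:binary} then gives $\disc(\mathcal S)\geq 2^{1-1/\ceil{n/2}}$. Since $\mathcal S$ was arbitrary and there are finitely many index stages, passing to the infimum over strategies preserves the bound. One should also check that $\disc(n)$ is a genuine minimum, or else read it as an infimum; the lower bound holds in either reading.

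For the upper bound, I will take the merging sequence of DHW that defines lex-merge, reverse it, and normalize by the total basket length. This yields a binary refinement of masses with initial mass $1$. Each reversed merge splits one basket into two positive baskets whose sum is the parent, so this is a valid process. Normalization multiplies every weight by the same constant and so leaves every ratio $b_i/a_i$ unchanged. Lemma~\ref{lem:interval-realization} then realizes the process as an interval strategy with identical stagewise discrepancies. By Theorem 3.5 of \cite{DHW}, as recorded in (10), this strategy has discrepancy exactly $2^{1-1/\ceil{n/2}}$. Hence $\disc(n)\leq 2^{1-1/\ceil{n/2}}$, the minimum is attained, and together with the lower bound this gives (11). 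Optimality of lex-merge follows immediately.

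The main obstacle is not in the argument above, since the lower bound is a direct specialization of Theorem~\ref{thm:binary}. It is the faithful transfer of the external construction. One must verify that DHW's lex-merge, read backwards, really produces a sequence with exactly $i$ pieces at stage $i$ for all $1\leq i\leq n$, with every intermediate stage counted in the maximum. One must also check that their discrepancy convention matches (3), so that the value quoted in (10) is the same quantity minimized in $\disc(n)$. I would check these bookkeeping points against the definitions in \cite{DHW}. As a sanity check I would verify small cases directly: $n=1,2$ give $1$, and $n=3$ gives $\sqrt2$ via the split $1\to(\sqrt2-1,\,2-\sqrt2)$ followed by splitting the larger piece in half.
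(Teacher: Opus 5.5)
Your proposal is correct and follows the paper's proof. The lower bound is Corollary~\ref{cor:interval-lower}, i.e.\ Theorem~\ref{thm:binary} applied to interval lengths, and the upper bound is the reversed, normalized lex-merge process realized by intervals via Lemma~\ref{lem:interval-realization}, with discrepancy given by Theorem~3.5 of \cite{DHW} as in (10); your $n=3$ check (splitting $1$ into $\sqrt2-1$ and $2-\sqrt2$, then halving the larger piece) does attain $\sqrt2=2^{1-1/\ceil{3/2}}$.
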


\begin{proof}
Corollary~\ref{cor:interval-lower} gives the lower bound in (11), while (10) gives the matching upper bound.
\end{proof}

Thus Conjecture 1.4 of \cite{DHW} holds for every positive integer $n$.

The main results are also accompanied by a machine-checked Isabelle/HOL
formalization \cite{formalization}. The development formalizes the
midpoint-survivor lower bound for labeled mass states, verifies that the full
lexicographic basket trace agrees with the numerical queue trace used in the
upper-bound proof, and establishes both the projection from interval
bisections to mass refinements and the realization of every abstract strategy
by genuine intervals. Thus it checks that the exact theorem concerns the
named lex-merge construction and the original interval problem, rather than
only an auxiliary numerical model. It is provided as a supplementary
verification artifact for the argument in this note.

\begin{corollary}
\label{cor:asymptotic}
As $n\to\infty$,
\[
  \disc(n)=2-\frac{4\log 2}{n}+O\!\left(\frac1{n^2}\right).
  \tag{12}
\]
\end{corollary}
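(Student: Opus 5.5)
By Theorem~\ref{thm:main}, $\disc(n)=2^{1-1/m}$ with $m=\ceil{n/2}$. The plan is to expand this closed form directly. Write $2^{1-1/m}=2\exp(-\log 2/m)$ and use the Taylor expansion $e^{-t}=1-t+O(t^2)$, which holds uniformly for $0\le t\le \log 2$. This gives
\[
  \disc(n)=2-\frac{2\log 2}{m}+O\!\left(\frac1{m^2}\right).
\]

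Next, convert from $m$ to $n$. We have $m=n/2+\delta_n$ with $\delta_n\in\{0,\tfrac12\}$, according to the parity of $n$. Then
\[
  \frac1m=\frac{2}{n}\cdot\frac{1}{1+2\delta_n/n}=\frac2n+O\!\left(\frac1{n^2}\right),
\]
and $1/m^2=O(1/n^2)$ since $m\ge n/2$. Substituting, the main term becomes $2\log 2\cdot\frac2n=\frac{4\log 2}{n}$, and all errors are $O(1/n^2)$. This yields (12).

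No step is a genuine obstacle. The only point needing care is that the ceiling contributes a parity-dependent shift of $m$. That shift is bounded, so it affects only the $O(1/n^2)$ term and not the $1/n$ coefficient. It is worth stating explicitly so the reader sees that the expansion holds along all $n$, not just even $n$.
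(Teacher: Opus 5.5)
Your proposal is correct and follows the same route as the paper: substitute $m=\ceil{n/2}$ into the closed form from Theorem~\ref{thm:main}, expand $2\exp(-\log 2/m)$ to first order, and convert using $1/m=2/n+O(1/n^2)$. Your explicit handling of the parity shift $\delta_n\in\{0,\tfrac12\}$ and of the uniformity of the Taylor bound simply spells out steps the paper states without comment.
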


\begin{proof}
Let $m=\ceil{n/2}$. Then $1/m=2/n+O(1/n^2)$, and hence
\[
  2^{1-1/m}
  =2\exp\!\left(-\frac{\log 2}{m}\right)
  =2-\frac{2\log 2}{m}+O\!\left(\frac1{m^2}\right)
  =2-\frac{4\log 2}{n}+O\!\left(\frac1{n^2}\right).
\]
\end{proof}

The lower bound in \cite{DHW} was obtained through a structural analysis of the ordered interval lengths and produced the scale $\ceil{n/3}$. The refinement argument above bypasses that ordering information entirely. The optimal $\ceil{n/2}$ scale arises instead from the existence of a piece that survives from just beyond the midpoint of the process to its terminal state.

\section{\texorpdfstring{An $r$-ary extension}{An r-ary extension}}
\label{sec:rary}

The same argument applies when every refinement produces more than two children. We record the extension because it isolates the combinatorial principle behind Theorem~\ref{thm:binary}.

Fix an integer $r\geq 2$. An $r$-ary refinement process begins with one positive piece and, at each step, replaces one piece by $r$ positive pieces whose weights sum to the weight of their parent. Suppose that the process terminates with $N$ pieces. Necessarily
\[
  N=1+(r-1)T
  \tag{13}
\]
for some number $T$ of refinement steps. Let $D$ again denote the largest, over all states, of the ratio between the maximum and minimum weights.

\begin{proposition}
\label{prop:rary}
Let $m=\ceil{N/r}$. Every $r$-ary refinement process terminating with $N$ pieces satisfies
\[
  D\geq r^{1-1/m}.
  \tag{14}
\]
\end{proposition}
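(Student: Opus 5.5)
The plan is to repeat the two-ingredient argument of Theorem~\ref{thm:binary}, with the factor $2$ replaced by $r$ and the midpoint replaced by the correct intermediate state. Index the states by the number of refinements performed, $t=0,1,\ldots,T$. State $t$ then contains $1+(r-1)t$ pieces. As before, let $a_t$ and $b_t$ be the minimum and maximum weights at state $t$.

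\emph{Step 1 (minimum decay).} I will prove the analogue of Lemma~\ref{lem:min-decay}: $a_{t+1}\le (D/r)\,a_t$ for every $t<T$. The refined piece $x$ has weight at most $b_t\le D a_t$. Its $r$ children have weights summing to $w_t(x)$, so some child has weight at most $w_t(x)/r$. Iterating over $k$ consecutive steps gives $a_{t+k}\le (D/r)^k a_t$.

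\emph{Step 2 (survivor).} I will prove the analogue of Lemma~\ref{lem:survivor}, taking $k=m-1$ remaining refinements, that is, the state $t_0=T-m+1$. This state must exist, i.e.\ $t_0\ge 0$. That holds because $N=1+(r-1)T\le r(T+1)$, so $m=\ceil{N/r}\le T+1$. State $t_0$ has $N-(r-1)(m-1)$ pieces. Each refinement destroys exactly one existing piece, and distinct pieces of state $t_0$ must first be refined at distinct steps. Hence some piece of state $t_0$ survives unchanged to the end, provided
\[
N-(r-1)(m-1)>m-1,
\qquad\text{equivalently}\qquad
N\ge r(m-1)+1 .
\]
This inequality is exactly $\ceil{N/r}\le (N+r-1)/r$, which always holds. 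Verifying this counting is the only real point of care, and it is the step I would double-check.

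\emph{Step 3 (combine).} Let $x$ be the survivor. Then
\[
a_{t_0}\le w(x)\le b_T\le D a_T\le D\,(D/r)^{m-1}a_{t_0}.
\]
Dividing by $a_{t_0}>0$ gives $D^m\ge r^{m-1}$, which is (14).

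The degenerate case $T=0$ (so $N=1$ and $m=1$) is covered by the same argument: there $t_0=0$, there are no remaining steps, and the bound reads $D\ge r^0=1$, which is trivial.
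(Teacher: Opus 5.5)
Your proposal is correct and follows the paper's proof essentially step for step. It uses the same minimum-decay inequality $a_{t+1}\le (D/r)\,a_t$, the same survivor count at the state with $m-1$ refinements remaining (your condition $N\ge r(m-1)+1$ is the paper's $m-1<N/r$), and the same final combination. The paper instead obtains $t_0\ge 0$ from the slightly stronger $m\le T$ for $T\ge 1$ and treats $T=0$ separately, whereas you use $m\le T+1$ directly; the difference is cosmetic.
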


\begin{proof}
Let $\alpha_t$ denote the minimum weight after $t$ refinements, for $0\leq t\leq T$. At one refinement, the parent has weight at most $D\alpha_t$, and at least one of its $r$ children has weight at most $1/r$ of the parent weight. Thus successive minima satisfy
\[
  \alpha_{t+1}\leq \frac{D}{r}\alpha_t.
  \tag{15}
\]

The process has at least $m-1$ refinements because $m-1\leq T$; for $T\geq 1$, this follows from
\[
  \frac Nr=T-\frac{T-1}{r}\leq T,
\]
and the case $T=0$ is immediate. Consider the state with exactly $m-1$ refinements remaining. It contains
\[
  N-(r-1)(m-1)
\]
pieces. Since $m-1<N/r$, this number is strictly greater than $m-1$, the number of refinements remaining. Therefore at least one current piece survives unchanged to the terminal state.

The state under consideration is after $T-(m-1)$ refinements. The survivor gives
\[
  \alpha_{T-(m-1)}\leq D\alpha_T,
\]
while iterating (15) through the remaining $m-1$ refinements gives
\[
  \alpha_T\leq \left(\frac{D}{r}\right)^{m-1}\alpha_{T-(m-1)}.
\]
Consequently,
\[
  1\leq D\left(\frac{D}{r}\right)^{m-1}
   =\frac{D^m}{r^{m-1}},
\]
which proves (14).
\end{proof}

For $r=2$, Proposition~\ref{prop:rary} is exactly Theorem~\ref{thm:binary}. We do not claim that the bound in (14) is sharp for every $r>2$.

\section{Concluding remarks}
\label{sec:conclusion}

The exact finite interval discrepancy is therefore
\[
  \disc(n)=2^{1-1/\ceil{n/2}}.
\]
The proof separates the problem into two independent components. The upper bound is constructive and is supplied by lex-merge. The lower bound is universal: it is forced by binary refinement itself and is independent of the geometry and ordering of the intervals.

The argument determines the optimal value but does not characterize all strategies attaining it. A natural further problem is to describe the equality cases in Theorem~\ref{thm:binary} and, in particular, to determine to what extent optimal interval strategies are forced to have the structure exhibited by lex-merge.


\begin{thebibliography}{9}
\small

\bibitem{deBruijnErdos}
N.~G. de Bruijn and P.~Erd\H{o}s.
\newblock Sequences of points on a circle.
\newblock \emph{Proceedings of the Section of Sciences of the Koninklijke Nederlandse Akademie van Wetenschappen te Amsterdam} 52 (1949), 14--17.

\bibitem{DHW}
Jared DeLeo, Owen Henderschedt, and Chris Wells.
\newblock A finite victory over de Bruijn--Erdos in interval discrepancy.
\newblock arXiv:2605.29166 [math.CO], 2026.
\newblock \url{https://doi.org/10.48550/arXiv.2605.29166}.

\bibitem{formalization}
Arthur F. Ramos.
\newblock Lex-Merge Is Optimal for Finite Interval Discrepancy: Isabelle/HOL formalization, version 1.0.1.
\newblock Zenodo, 2026.
\newblock \url{https://doi.org/10.5281/zenodo.21856255}.

\end{thebibliography}
\end{document}